\newcommand{\Tp}{\mathcal{T}_{\mathfrak{p}}}
\newcommand{\fp}{\mathfrak{p}}
\newtheorem{theorem}{Theorem}[section]
\newtheorem{lemma}[theorem]{Lemma}
\newtheorem{corollary}[theorem]{Corollary}
\theoremstyle{definition}
\newtheorem{definition}[theorem]{Definition}
\theoremstyle{remark}
\numberwithin{equation}{section}
\DeclareMathOperator{\val}{val}
\newcommand{\cO}{\mathcal{O}}
\begin{document}

\title[Fundamental domains]{Fundamental domains for quaternionic $S$-arithmetic groups over totally real fields}

\author[Masdeu]{Marc Masdeu}
\address{}
\curraddr{}
\email{Marc.Masdeu@uab.cat}
\thanks{}

\author[Torrents]{Eloi Torrents}
\address{}
\curraddr{}
\email{Eloi.Torrents@uab.cat}
\thanks{}

\subjclass[2020]{11F06 Primary, 20H10}

\date{\today}
\dedicatory{}

\begin{abstract}
Let $B$ be a totally-definite quaternion algebra over a totally real field $F$, let $\mathfrak{p}$ be a prime ideal of $F$, and let $\Gamma$ be the group of reduced norm-$1$ elements of an Eichler $\mathcal{O}_F[1/\mathfrak{p}]$-order $R$ inside $B$. We give an algorithm to compute the fundamental domain for the action of $\Gamma$ on the Bruhat-Tits tree of $\operatorname{GL}_2(F_\mathfrak{p})$. Using this, we tabulate Shimura curves of genus up to $3$ over any totally real field which can be $\mathfrak{p}$-adically uniformized for some prime $\mathfrak{p}$.
\end{abstract}

\maketitle

\section{Introduction}
\label{sec:introduction}

Let $F$ be a totally real number field, fix a prime 
$\mathfrak{p} \subseteq F$, and let $F_{\mathfrak{p}}$ denote the completion of $F$ at $\mathfrak{p}$.
In this work we develop an algorithm to compute fundamental domains
for the action of certain discrete subgroups of 
$\mathrm{SL}_2(F_{\mathfrak{p}})$ on the Bruhat-Tits tree
associated with $\mathrm{GL}_2(F_{\mathfrak{p}})$.
The discrete groups we consider arise from Eichler orders in definite quaternion algebras defined over~$F$.

For Shimura curves with bad reduction at~$\mathfrak{p}$,
the structure of the bad special fiber is encoded by these fundamental domains.
We have computed\footnote{A \texttt{SageMath} implementation of this algorithms is available at \url{https://github.com/eloitor/btquotients}.}
an extensive collection of examples of fundamental domains
arising from $\mathfrak{p}$-adic uniformizations of Shimura curves.

The rest of this note is organized as follows. In Section~\ref{sec:notation} we introduce the basic notation used throughout the article. Section~\ref{sec:fundoms} contains the description of the algorithms used to compute the fundamental domains.
In Section~\ref{sec:examples} we illustrate these algorithms with some examples. Finally, in Section~\ref{sec:applications} we show how to use our algorithms to tabulate some $\mathfrak{p}$-adic uniformizable Shimura curves of genus up to $3$.

\section{Notation and setup}
\label{sec:notation}

In this section, we introduce certain $S$-arithmetic quaternionic groups acting on a corresponding
Bruhat-Tits tree. Throughout this section, $F$ will denote a totally real number field with ring of integers $\cO_F$.
We will also consider a fixed prime ideal $\fp$ in $\cO_F$. By $F_\fp$ we will denote the completion of $F$ at $\fp$.

\subsection{Quaternion algebras and orders}
For precise definition and basic facts on quaternion algebras we refer the reader to~\cite{voight-quatalgs}.
A \emph{quaternion algebra} over $F$ is a $4$-dimensional $F$-algebra
\[
B = \left( \frac{a,b}{F} \right)
  = F \langle i, j \mid i^2 = a,\; j^2 = b,\; ij = -ji \rangle,
\]
for some $a,b \in F^{\times}$. The ramification set $\operatorname{Ram}(B)$ of $B$ is the set of places $v$
of $F$ for which $B\otimes_F F_v$ is a division algebra. The set $\operatorname{Ram}(B)$ has even cardinality,
and the discriminant of $B$ is the product of those places in $\operatorname{Ram}(B)$ which are finite. We say
that a $B$ is \emph{ramified} at $v$ if $v \in \operatorname{Ram}(B)$, and it is \emph{split} otherwise. Moreover,
$B$ is \emph{definite} if it is ramified at all real places of $F$, and \emph{indefinite} otherwise.

Consider a definite quaternion algebra $B$ over $F$ that splits at $\mathfrak{p}$.
Let $R^{\max} \subseteq B$ be a maximal order, and $R \subseteq R^{\max}$ be an Eichler order of level coprime to $\mathfrak{p}$.
Let $\iota$ be a splitting
\[
  B \otimes_F F_{\mathfrak{p}} \;\;\tilde{\longrightarrow}^{\;\iota\;}\;\; M_2(F_{\mathfrak{p}})
\]
such that $\iota(R^{\max}_{\mathfrak{p}}) = M_2(\mathcal{O}_{F_{\mathfrak{p}}})$.

Let $S = \{\mathfrak{p}\} \cup S_\infty$.
Let $R \subseteq B$ be an Eichler order, and let $R[1/\mathfrak{p}]_1^{\times}$ denote the subgroup of elements of reduced norm $1$ in the order $R[1/\mathfrak{p}]$.
We define the $S$-arithmetic group
\[
  \Gamma = \iota\!\big(R[1/\mathfrak{p}]_1^{\times}\big)
  \;\subseteq\; \mathrm{SL}_2(F_{\mathfrak{p}}).
\]
We will explain an algorithm to compute a fundamental domain for the action of $\Gamma$ on
$\mathcal{T}_{\mathfrak{p}}$, the Bruhat-Tits tree for $\mathrm{GL}_2(F_{\mathfrak{p}})$.



\subsection{The Bruhat-Tits tree}
Let $\mathcal{O}_F$ be the ring of integers of a number field $F$.
We fix a prime ideal $\mathfrak{p} \subseteq \mathcal{O}_F$.
Let $F_{\mathfrak{p}}$ be the completion of $F$ at $\mathfrak{p}$,
and let $\mathcal{O}_{F_{\mathfrak{p}}}$ be its valuation ring, with $\mathfrak{p}$-adic valuation $v_{\mathfrak{p}}$.
Let $\pi \in \mathcal{O}_F$ be a uniformizer for $\mathfrak{p}$ in $\mathcal{O}_{F_{\mathfrak{p}}}$. The following definitions and basic properties can be found in~
\cite{serre2002trees}, we recall them in order to fix notation.

\begin{definition}
The \emph{Bruhat-Tits tree} for $\mathrm{GL}_2(F_{\mathfrak{p}})$
is the graph $\mathcal{T}_{\mathfrak{p}}$ whose vertices are equivalence classes of 
$\mathcal{O}_{F_{\mathfrak{p}}}$-lattices in the two-dimensional vector space $F_{\mathfrak{p}}^2$, modulo homothety.
Two vertices $v_1, v_2$ of $\mathcal{T}_{\mathfrak{p}}$ are connected by a directed edge
if there exist representative lattices $\Lambda_1, \Lambda_2$ belonging to the classes
$v_1, v_2$ such that
\[
  \pi \Lambda_1 \subsetneq \Lambda_2 \subsetneq \Lambda_1.
\]
\end{definition}

The graph $\mathcal{T}_{\mathfrak{p}}$ is a homogeneous tree, and the degree of each vertex is $N(\mathfrak{p}) + 1$.

We denote the homothety class of a lattice $\Lambda \subseteq F_{\mathfrak{p}}^2$ by $[\Lambda]$,
which corresponds to a vertex of the Bruhat-Tits tree.
Each vertex can be represented by a 
$2\times 2$ matrix over $F_{\mathfrak{p}}$ whose column vectors span the corresponding lattice.
Let $v_0$ be the vertex associated with the standard lattice $\mathcal{O}_{F_{\mathfrak{p}}}^2$;
in particular, $v_0$ is represented by the identity matrix.

We consider the left action of $\mathrm{GL}_2(F_{\mathfrak{p}})$ on the vertices of 
$\mathcal{T}_{\mathfrak{p}}$
given by linear transformations. 
Since this action preserves adjacency in the graph, it induces an action on the edges
and hence on the entire tree $T_{\mathfrak{p}}$.

This action is transitive both on vertices and on edges.
The stabilizer of the vertex $v_0$ is 
$F_{\mathfrak{p}}^{\times}\mathrm{GL}_2(\mathcal{O}_{F_{\mathfrak{p}}})$,
so that the set of vertices of the tree can be identified with the quotient
\[
  \mathrm{GL}_2(F_{\mathfrak{p}}) 
  \big/ 
  F_{\mathfrak{p}}^{\times}\mathrm{GL}_2(\mathcal{O}_{F_{\mathfrak{p}}}).
\]

On the other hand, the stabilizer of the directed edge connecting 
$v_0$ to 
$v_1 = [\pi \mathcal{O}_{F_{\mathfrak{p}}} \times \mathcal{O}_{F_{\mathfrak{p}}}]$ 
is
\[
  F_{\mathfrak{p}}^{\times} \mathrm{GL}_2(\mathcal{O}_{F_{\mathfrak{p}}})
  \cap 
  \begin{pmatrix} \pi & 0 \\ 0 & 1 \end{pmatrix}
  F_{\mathfrak{p}}^{\times} \mathrm{GL}_2(\mathcal{O}_{F_{\mathfrak{p}}})
  \begin{pmatrix} \pi & 0 \\ 0 & 1 \end{pmatrix}^{-1}.
\]
Hence, the set of edges of $T_{\mathfrak{p}}$ can be identified with the quotient
\[
  \mathrm{GL}_2(F_{\mathfrak{p}})
  \big/
  F_{\mathfrak{p}}^{\times}\Gamma_0(\mathfrak{p}\mathcal{O}_{F_{\mathfrak{p}}}),
\]
where
\[
  \Gamma_0(\mathfrak{p}\mathcal{O}_{F_{\mathfrak{p}}})
  =
  \left\{
  \begin{pmatrix} a & b \\ c & d \end{pmatrix}
  \in \mathrm{GL}_2(\mathcal{O}_{F_{\mathfrak{p}}})
  \;\middle|\;
  c \in \mathfrak{p}\mathcal{O}_{F_{\mathfrak{p}}}
  \right\}.
\]

We now show that vertices and edges of the Bruhat-Tits tree can be represented
by $2\times 2$ matrices with coefficients in $\mathcal{O}_F$,
which simplifies computational manipulation.
If $S \subseteq \mathcal{O}_F$ is a set of representatives of the quotient
$\mathcal{O}_{F_{\mathfrak{p}}}/\mathfrak{p}\mathcal{O}_{F_{\mathfrak{p}}}$,
then for $n\ge1$ we define
\[
  S_n = S + \pi S + \cdots + \pi^{n-1} S
  \subseteq \mathcal{O}_F,
\]
which forms a set of representatives for
$\mathcal{O}_{F_{\mathfrak{p}}}/\mathfrak{p}^n \mathcal{O}_{F_{\mathfrak{p}}}$.

\begin{lemma}\label{matrix_coeffs_rep_vertices}
The quotients
\[
  \mathrm{GL}_2(F_{\mathfrak{p}})
  \big/
  F_{\mathfrak{p}}^{\times}\Gamma_0(\mathfrak{p}\mathcal{O}_{F_{\mathfrak{p}}})
  \quad\text{and}\quad
  \mathrm{GL}_2(F_{\mathfrak{p}})
  \big/
  F_{\mathfrak{p}}^{\times}\mathrm{GL}_2(\mathcal{O}_{F_{\mathfrak{p}}})
\]
admit systems of representatives consisting of matrices with entries in $\mathcal{O}_F$.
Moreover, the representatives for the edges of the Bruhat-Tits tree can be chosen to have the form
\[
  \begin{pmatrix} \pi^m & 0 \\ r & \pi^n \end{pmatrix}
  \quad\text{with } r \in S_{n+1},
  \qquad\text{or}\qquad
  \begin{pmatrix} 0 & \pi^m \\ \pi^n & r \end{pmatrix}
  \quad\text{with } r \in S_n,
\]
while representatives for the vertices can be chosen as
\[
  \begin{pmatrix} \pi^m & 0 \\ r & \pi^n \end{pmatrix}
  \quad\text{or}\quad
  \begin{pmatrix} 0 & \pi^m \\ \pi^n & r \end{pmatrix},
  \qquad r \in S_n.
\]
\end{lemma}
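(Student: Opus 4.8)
The plan is to work entirely with lattices and to produce the required matrices by hand, using two elementary inputs: column reduction over the discrete valuation ring $\mathcal{O}_{F_{\mathfrak{p}}}$ (equivalently, the elementary divisor theorem) and the fact that $S_n\subseteq\mathcal{O}_F$ is a complete set of residues modulo $\mathfrak{p}^n\mathcal{O}_{F_{\mathfrak{p}}}$. Throughout I will use that a coset $g\,F_{\mathfrak{p}}^{\times}\mathrm{GL}_2(\mathcal{O}_{F_{\mathfrak{p}}})$ is the same datum as the homothety class of the lattice spanned by the columns of $g$, that right multiplication by $\mathrm{GL}_2(\mathcal{O}_{F_{\mathfrak{p}}})$ is a change of lattice basis, and that multiplication by $F_{\mathfrak{p}}^{\times}$ is homothety. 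Since $\pi\in\mathcal{O}_F$, $0\in\mathcal{O}_F$ and $S_n\subseteq\mathcal{O}_F$, once a representative has been brought to one of the displayed shapes its entries automatically lie in $\mathcal{O}_F$; so it suffices to prove the ``moreover'' part.

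First I would treat the vertices. Given a vertex, pick any lattice $L$ in its class; there is a unique $k\in\mathbb{Z}$ with $\Lambda:=\pi^k L\subseteq\mathcal{O}_{F_{\mathfrak{p}}}^2$ but $\Lambda\not\subseteq\pi\mathcal{O}_{F_{\mathfrak{p}}}^2$, and $\Lambda$ still represents the vertex. Let $\pi^m\mathcal{O}_{F_{\mathfrak{p}}}$ be the image of $\Lambda$ under the first-coordinate projection and let $\{0\}\times\pi^n\mathcal{O}_{F_{\mathfrak{p}}}=\Lambda\cap(\{0\}\times F_{\mathfrak{p}})$; both exponents are non-negative because $\Lambda\subseteq\mathcal{O}_{F_{\mathfrak{p}}}^2$. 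Choosing $e_1\in\Lambda$ with first coordinate $\pi^m$ and $e_2=(0,\pi^n)$, a short chase with the exact sequence $0\to\{0\}\times F_{\mathfrak{p}}\to F_{\mathfrak{p}}^2\to F_{\mathfrak{p}}\to 0$ gives $\Lambda=\mathcal{O}_{F_{\mathfrak{p}}}e_1+\mathcal{O}_{F_{\mathfrak{p}}}e_2$, so $\Lambda$ is the column span of $\bigl(\begin{smallmatrix}\pi^m&0\\ c&\pi^n\end{smallmatrix}\bigr)$ for some $c\in\mathcal{O}_{F_{\mathfrak{p}}}$. Replacing $e_1$ by $e_1+s e_2$ changes $c$ to $c+\pi^n s$, so $c$ matters only modulo $\mathfrak{p}^n\mathcal{O}_{F_{\mathfrak{p}}}$; taking the representative $r\in S_n$ with $r\equiv c$ yields a representative of the first shape over $\mathcal{O}_F$, and right multiplication by $\bigl(\begin{smallmatrix}0&1\\1&0\end{smallmatrix}\bigr)\in\mathrm{GL}_2(\mathcal{O}_{F_{\mathfrak{p}}})$ turns it into one of the second shape.

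For the edges I would use the ``source vertex'' surjection
\[
  \mathrm{GL}_2(F_{\mathfrak{p}})\big/F_{\mathfrak{p}}^{\times}\Gamma_0(\mathfrak{p}\mathcal{O}_{F_{\mathfrak{p}}})
  \;\longrightarrow\;
  \mathrm{GL}_2(F_{\mathfrak{p}})\big/F_{\mathfrak{p}}^{\times}\mathrm{GL}_2(\mathcal{O}_{F_{\mathfrak{p}}}),
\]
whose fibres are copies of $\mathrm{GL}_2(\mathcal{O}_{F_{\mathfrak{p}}})/\Gamma_0(\mathfrak{p}\mathcal{O}_{F_{\mathfrak{p}}})\cong\mathbb{P}^1(\mathcal{O}_{F_{\mathfrak{p}}}/\mathfrak{p})$, i.e.\ the $N(\mathfrak{p})+1$ directed edges out of a fixed vertex. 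As coset representatives for $\mathrm{GL}_2(\mathcal{O}_{F_{\mathfrak{p}}})/\Gamma_0(\mathfrak{p}\mathcal{O}_{F_{\mathfrak{p}}})$ I would take $\bigl(\begin{smallmatrix}1&0\\ s&1\end{smallmatrix}\bigr)$ for $s\in S$ (the affine line) together with $\bigl(\begin{smallmatrix}0&1\\1&0\end{smallmatrix}\bigr)$ (the point at infinity), and right-multiply a vertex representative $\bigl(\begin{smallmatrix}\pi^m&0\\ r&\pi^n\end{smallmatrix}\bigr)$ with $r\in S_n$, coming from the previous step, by each of them. This produces $\bigl(\begin{smallmatrix}\pi^m&0\\ r+\pi^n s&\pi^n\end{smallmatrix}\bigr)$ and $\bigl(\begin{smallmatrix}0&\pi^m\\ \pi^n&r\end{smallmatrix}\bigr)$; as $(r,s)$ runs over $S_n\times S$ the entry $r+\pi^n s$ runs over $S_{n+1}$ by the very definition of $S_{n+1}$, so letting the source vertex vary over all vertices produces exactly the two displayed edge shapes.

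The two routine verifications I would still owe are that $\Lambda=\mathcal{O}_{F_{\mathfrak{p}}}e_1+\mathcal{O}_{F_{\mathfrak{p}}}e_2$ in the vertex step and that the listed matrices are a complete set of coset representatives for $\Gamma_0(\mathfrak{p}\mathcal{O}_{F_{\mathfrak{p}}})$ in $\mathrm{GL}_2(\mathcal{O}_{F_{\mathfrak{p}}})$; the latter is immediate after reducing modulo $\mathfrak{p}$ and reading off the bottom row as a point of $\mathbb{P}^1$ of the residue field. I do not expect a genuine obstacle; the only real care is in the bookkeeping for the edge case --- the appearance of $S_{n+1}$ in place of $S_n$, and the observation that for edges the two shapes are genuinely both needed because $\bigl(\begin{smallmatrix}0&1\\1&0\end{smallmatrix}\bigr)\notin\Gamma_0(\mathfrak{p}\mathcal{O}_{F_{\mathfrak{p}}})$, whereas for vertices they are interchangeable via this matrix.
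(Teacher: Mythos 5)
Your proof is correct, but it is organized differently from the paper's. The paper works directly on a matrix $\bigl(\begin{smallmatrix}a&b\\ c&d\end{smallmatrix}\bigr)$: it splits into the two cases $v_{\mathfrak{p}}(a)\le v_{\mathfrak{p}}(b)$ and $v_{\mathfrak{p}}(a)>v_{\mathfrak{p}}(b)$, clears one entry by right multiplication by a unipotent matrix in $\Gamma_0(\mathfrak{p}\mathcal{O}_{F_{\mathfrak{p}}})$, rescales the resulting diagonal (resp.\ antidiagonal) entries to powers of $\pi$ by a diagonal unit, and finally reduces the remaining entry into $S_{n+1}$ or $S_n$ by another unipotent element of $\Gamma_0$ --- a Hermite-normal-form computation carried out entirely inside the coset. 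You instead prove the vertex case by normalizing the lattice itself (projection to the first coordinate plus intersection with the second axis, i.e.\ the elementary divisor argument), and then obtain the edge case by fibering $\mathrm{GL}_2(F_{\mathfrak{p}})/F_{\mathfrak{p}}^{\times}\Gamma_0(\mathfrak{p}\mathcal{O}_{F_{\mathfrak{p}}})$ over the vertices with fibre $\mathbb{P}^1(\mathcal{O}_{F_{\mathfrak{p}}}/\mathfrak{p})$ and multiplying a vertex representative by the explicit coset representatives $\bigl(\begin{smallmatrix}1&0\\ s&1\end{smallmatrix}\bigr)$, $s\in S$, and $\bigl(\begin{smallmatrix}0&1\\ 1&0\end{smallmatrix}\bigr)$. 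Your route is slightly longer but more structural: it makes transparent why the first edge shape needs $r\in S_{n+1}=S_n+\pi^nS$ while the second keeps $r\in S_n$ (each vertex contributes $N(\mathfrak{p})$ edges of the first type and one of the second), and it cleanly separates the two claimed parametrizations. The paper's computation is shorter and, because it manipulates the matrix entries explicitly, translates more directly into the reduction routine used later in the algorithms. The two outstanding verifications you flag (the splitting $\Lambda=\mathcal{O}_{F_{\mathfrak{p}}}e_1+\mathcal{O}_{F_{\mathfrak{p}}}e_2$ and the coset decomposition of $\mathrm{GL}_2(\mathcal{O}_{F_{\mathfrak{p}}})$ modulo $\Gamma_0(\mathfrak{p}\mathcal{O}_{F_{\mathfrak{p}}})$) are indeed routine and your sketches of them are accurate; the only point worth writing out is that $F_{\mathfrak{p}}^{\times}\Gamma_0(\mathfrak{p}\mathcal{O}_{F_{\mathfrak{p}}})\cap\mathrm{GL}_2(\mathcal{O}_{F_{\mathfrak{p}}})=\Gamma_0(\mathfrak{p}\mathcal{O}_{F_{\mathfrak{p}}})$, which justifies identifying the fibres of your source-vertex map with $\mathrm{GL}_2(\mathcal{O}_{F_{\mathfrak{p}}})/\Gamma_0(\mathfrak{p}\mathcal{O}_{F_{\mathfrak{p}}})$.
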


\begin{proof}
Given a matrix in $\mathrm{GL}_2(F_{\mathfrak{p}})$,
we first scale it by an element of $F_{\mathfrak{p}}^{\times}$ to obtain
\[
  \begin{pmatrix} a & b \\ c & d \end{pmatrix}
  \in \mathrm{GL}_2(\mathcal{O}_{F_{\mathfrak{p}}}),
\]
with one entry having $\mathfrak{p}$-adic valuation~$0$.

Suppose that $v_{\mathfrak{p}}(a) \le v_{\mathfrak{p}}(b)$.
Then the matrix is right-equivalent under 
$\Gamma_0(\mathfrak{p}\mathcal{O}_{F_{\mathfrak{p}}})$
to one of the form
\[
  \begin{pmatrix} a & 0 \\ c & d' \end{pmatrix},
\]
since
\[
  \begin{pmatrix} a & b \\ c & d \end{pmatrix}
  \begin{pmatrix} 1 & -b/a \\ 0 & 1 \end{pmatrix}
  =
  \begin{pmatrix} a & 0 \\ c & d' \end{pmatrix}.
\]
Writing $a = \alpha \pi^n$ and $d' = \delta \pi^m$, we may scale by
$\begin{psmallmatrix}\alpha & 0 \\ 0 & \delta\end{psmallmatrix}$.
Finally, $c$ can be adjusted to lie in $S_{m+1}$ by acting with
\[
  \begin{pmatrix}
  1 & 0 \\
  \frac{(c\pmod{\pi^{m+1}}) - c}{\pi^m} & 1
  \end{pmatrix}
  \in \Gamma_0(\mathfrak{p}\mathcal{O}_{F_{\mathfrak{p}}}).
\]

If instead we have $v_{\mathfrak{p}}(a) > v_{\mathfrak{p}}(b)$,
then $a/b \in \mathfrak{p}\mathcal{O}_{F_{\mathfrak{p}}}$, and
\[
  \begin{pmatrix} a & b \\ c & d \end{pmatrix}
  \begin{pmatrix} 1 & 0 \\ -a/b & 1 \end{pmatrix}
  =
  \begin{pmatrix} 0 & b \\ c' & d \end{pmatrix}.
\]
As before, writing $c' = \gamma \pi^n$ and $b = \beta \pi^m$,
we can scale to make these powers of $\pi$,
and $d$ can be adjusted to lie in $S_n$ by acting with
\[
  \begin{pmatrix}
  1 & \frac{(d\pmod{\pi^n}) - d}{\pi^n} \\ 0 & 1
  \end{pmatrix}
  \in \Gamma_0(\mathfrak{p}\mathcal{O}_{F_{\mathfrak{p}}}).
  \qedhere
\]
\end{proof}

\section{Fundamental domains of the Bruhat-Tits tree}
\label{sec:fundoms}

We adapt the algorithm described in \cite{francmasdeu2014} to compute a fundamental domain of $\Tp$ for the group $\Gamma = \Gamma^\mathfrak{p}_{\mathfrak{N}^-, \mathfrak{N}^+}$.
The core of the algorithm involves a procedure for checking whether two given edges or vertices are equivalent under the group action and, if so, providing an element of the group realizing the equivalence.

More concretely, given two matrices $u$, $v$ representing two vertices (or two edges), we are interested in deciding if they are $\Gamma$-equivalent, by obtaining an element $g \in R[1/\mathfrak{p}]_1^{\times}$ such that $\iota(g)u = v$, if it exists.

We define the distance between two vertices of $\mathcal{T}_{\mathfrak{p}}$ as the length of the path connecting them.

\begin{lemma}
If two vertices or two edges are $\Gamma$-equivalent, the distance between them must be even.
\end{lemma}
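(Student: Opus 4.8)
The plan is to exploit the fact that $\Gamma$ consists of reduced norm-$1$ elements, which under the splitting $\iota$ become elements of $\mathrm{SL}_2(F_{\mathfrak p})$, i.e.\ matrices of determinant exactly $1$ (not merely a unit times $1$). The key invariant is therefore a parity attached to each vertex coming from the $\mathfrak p$-adic valuation of the determinant of any matrix representative.

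First I would recall that a vertex $[\Lambda]$ of $\Tp$ can be represented by a matrix $g \in \mathrm{GL}_2(F_{\mathfrak p})$ whose columns span $\Lambda$, and that two such matrices $g, g'$ represent the same vertex precisely when $g' = g h$ for some $h \in F_{\mathfrak p}^\times \mathrm{GL}_2(\cO_{F_{\mathfrak p}})$. Since $v_{\mathfrak p}(\det h) = 2 v_{\mathfrak p}(\lambda) + v_{\mathfrak p}(\det h_0)$ for $h = \lambda h_0$ with $h_0 \in \mathrm{GL}_2(\cO_{F_{\mathfrak p}})$ and $v_{\mathfrak p}(\det h_0) = 0$, the parity of $v_{\mathfrak p}(\det g)$ is a well-defined function on vertices; call it $\varepsilon(v) \in \mathbb{Z}/2\mathbb{Z}$. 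Next I would check that $\varepsilon$ changes by exactly $1$ along every edge: if $v_1, v_2$ are adjacent, then by definition there are representative lattices with $\pi \Lambda_1 \subsetneq \Lambda_2 \subsetneq \Lambda_1$, so $[\Lambda_1 : \Lambda_2] = N(\mathfrak p)$ and a matrix representative of $v_2$ differs from one of $v_1$ by right multiplication by a matrix of determinant-valuation $1$; hence $\varepsilon(v_1) \neq \varepsilon(v_2)$. It follows that for any two vertices $u, v$, the distance $d(u,v)$ has the same parity as $\varepsilon(u) - \varepsilon(v)$, since a path of length $\ell$ flips the parity $\ell$ times.

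Then I would observe that if $\iota(g) u = v$ for $g \in R[1/\mathfrak p]_1^\times$, then since $\iota(g) \in \mathrm{SL}_2(F_{\mathfrak p})$ has $\det \iota(g) = 1$, left multiplication by $\iota(g)$ carries a matrix representative $M_u$ of $u$ to the matrix representative $\iota(g) M_u$ of $v$, and $v_{\mathfrak p}(\det(\iota(g) M_u)) = v_{\mathfrak p}(\det M_u)$. Therefore $\varepsilon(u) = \varepsilon(v)$, and by the previous paragraph $d(u,v)$ is even. For edges the argument is the same applied to either endpoint: $\Gamma$-equivalent edges have $\Gamma$-equivalent endpoints (in the appropriate matching), so the two defining vertices of one edge are at even distance from the corresponding vertices of the other, and the distance between the edges is even.

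The only delicate point — the step I expect to need the most care — is making precise the claim that the parity of $v_{\mathfrak p}(\det g)$ is independent of the choice of matrix representative and that it is preserved under the $\Gamma$-action; this rests on the elementary but essential fact that $F_{\mathfrak p}^\times$ enters the vertex stabilizer only through scaling, contributing an \emph{even} change $2 v_{\mathfrak p}(\lambda)$ to the determinant valuation, whereas the genuinely arithmetic group $\Gamma$ lies in $\mathrm{SL}_2$ rather than $\mathrm{GL}_2$ and so contributes nothing. Everything else is a direct consequence of the lattice description of $\Tp$ recalled above. One should also note that the statement is vacuous unless such representatives exist, which is guaranteed by Lemma~\ref{matrix_coeffs_rep_vertices}.
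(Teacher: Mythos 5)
Your proof is correct and is essentially the paper's argument written out in full: the paper simply cites the Corollary to Proposition~1 in \cite[Chapter~2, \S1.2]{serre2002trees}, whose content is exactly your determinant-valuation parity invariant $\varepsilon$ (well-defined because the vertex stabilizer is $F_{\mathfrak{p}}^{\times}\mathrm{GL}_2(\mathcal{O}_{F_{\mathfrak{p}}})$, flipped along each edge, and preserved by $\Gamma \subseteq \mathrm{SL}_2(F_{\mathfrak{p}})$). No gaps; your version is just self-contained where the paper defers to Serre.
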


\begin{proof}
The Corollary to Proposition~1 of \cite[Chapter~2, Subsection~1.2]{serre2002trees}
applies in this setting, since $\Gamma \subseteq \mathrm{GL}_2(F_{\mathfrak{p}})$.
\end{proof}

We represent the two vertices/edges $u,v$ by reduced matrices in $M_2(\mathcal{O}_F)$ as in Lemma~\ref{matrix_coeffs_rep_vertices}, 
with $\det(u) = \pi^a$ and $\det(v) = \pi^b$.
We write $\mathrm{Hom}_\Gamma(u,v)$ for the set of elements of $\Gamma$ that send $u$ to $v$.

Let $G$ be the group $\mathrm{GL}_2(\mathcal{O}_{F,\mathfrak{p}})$ when working with vertices, and $\Gamma_0(\mathfrak{p}\mathcal{O}_{F,\mathfrak{p}})$ when working with edges. We have
\[
\begin{aligned}
  \mathrm{Hom}_\Gamma(u,v)
  &= \Gamma \cap \{ g \in \mathrm{GL}_2(F_\mathfrak{p}) \mid g \cdot [u] = [v] \} \\
  &= \Gamma \cap \{ g \in \mathrm{GL}_2(F_\mathfrak{p}) \mid g \cdot u \, F_\mathfrak{p}^\times G = v \, F_\mathfrak{p}^\times G \} \\
  &= \Gamma \cap \{ v^{-1} \, g \, u \mid g \in F_\mathfrak{p}^\times G \} \\
  &= \Gamma \cap v^{-1} F_\mathfrak{p}^\times G \, u.
\end{aligned}
\]

Let $\Lambda_0$ be the lattice $M_2(\mathcal{O}_{F,\mathfrak{p}})$ when working with vertices, and
\[
  \Lambda_0 = M_0(\mathfrak{p}\mathcal{O}_{F,\mathfrak{p}}) 
=
  \left\{
    \begin{pmatrix}
      a & b \\ c & d
    \end{pmatrix}
    \in M_2(\mathcal{O}_{F,\mathfrak{p}})
    \;\middle|\;
    c \in \mathfrak{p}\mathcal{O}_{F,\mathfrak{p}}
    \right\}.
\]

\begin{lemma}\label{GammaHom}
If the distance between two vertices or two edges of $\mathcal{T}_{\mathfrak{p}}$ represented by the pair of matrices $u,v \in M_2(\mathcal{O}_{F,\mathfrak{p}})$ is odd, then $\mathrm{Hom}_\Gamma(u,v) = \emptyset$.  
Otherwise, let $2m = a + b$, where $a=\val_\pi(\det u)$ and $b=\val_\pi(\det v)$ as before. Then
\[
  \mathrm{Hom}_\Gamma(u,v)
  = \Gamma \cap \pi^{-m} v^{*} \Lambda_0 u.
\]
\end{lemma}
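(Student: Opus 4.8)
The plan is to start from the identity
\[
  \mathrm{Hom}_\Gamma(u,v) = \Gamma \cap v^{-1} F_\mathfrak{p}^\times G\, u
\]
established just above the statement, rewrite $v^{-1}$ in terms of the adjugate $v^{*}$ (so that $v^{*}v = vv^{*} = \det(v)\,I = \pi^{b}I$ and $\det(v^{*}) = \det(v) = \pi^{b}$), and then use the reduced-norm-one condition defining $\Gamma$ to pin down the otherwise free central scalar. First I would dispose of the odd case: by the previous lemma two vertices or edges at odd distance are never $\Gamma$-equivalent, so $\mathrm{Hom}_\Gamma(u,v) = \emptyset$; and since the distance from $v_0$ to $[w]$ has the same parity as $\val_\pi(\det w)$ for any reduced representative $w$ (elementary divisors), evenness of the distance is equivalent to $a+b$ being even, so $m = (a+b)/2 \in \mathbb{Z}$ and $2m = a+b$ as in the statement.

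Since $F_\mathfrak{p}^\times$ is central and $v^{-1} = \pi^{-b}v^{*}$, we have $\mathrm{Hom}_\Gamma(u,v) = \Gamma \cap v^{*}F_\mathfrak{p}^\times G\,u$, and the claim reduces to showing this set equals $\Gamma \cap \pi^{-m}v^{*}\Lambda_0 u$. For the inclusion $\subseteq$, write an element as $g = \lambda\, v^{*}h u$ with $\lambda \in F_\mathfrak{p}^\times$, $h \in G$. Taking determinants and using $\det g = 1$, $\det h \in \mathcal{O}_{F,\mathfrak{p}}^\times$, $\det u = \pi^{a}$, $\det v^{*} = \pi^{b}$, one gets $\lambda^{2} = \pi^{-a-b}\det(h)^{-1}$, hence $\val_\pi(\lambda) = -m$, i.e. $\lambda = \varepsilon\pi^{-m}$ with $\varepsilon \in \mathcal{O}_{F,\mathfrak{p}}^\times$. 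Then $g = \pi^{-m}v^{*}(\varepsilon h)u$, and $\varepsilon h \in \mathcal{O}_{F,\mathfrak{p}}^\times G \subseteq \Lambda_0$ because $G$ is the unit group of the $\mathcal{O}_{F,\mathfrak{p}}$-order $\Lambda_0$ (so $G \subseteq \Lambda_0$) and $\Lambda_0$ is stable under scaling by $\mathcal{O}_{F,\mathfrak{p}}$; thus $g \in \pi^{-m}v^{*}\Lambda_0 u$.

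For the reverse inclusion, take $g \in \Gamma$ with $g = \pi^{-m}v^{*}\ell u$ for some $\ell \in \Lambda_0$. Computing determinants, $1 = \det g = \pi^{-2m}\pi^{b}\det(\ell)\pi^{a} = \det(\ell)$, so $\ell$ is an integral matrix of determinant $1$, hence $\ell \in \mathrm{GL}_2(\mathcal{O}_{F,\mathfrak{p}})$, and therefore $\ell \in \Lambda_0 \cap \mathrm{GL}_2(\mathcal{O}_{F,\mathfrak{p}}) = G$ (for vertices this intersection is just $\mathrm{GL}_2(\mathcal{O}_{F,\mathfrak{p}})$; for edges, an integral matrix of unit determinant whose lower-left entry lies in $\mathfrak{p}\mathcal{O}_{F,\mathfrak{p}}$ lies in $\Gamma_0(\mathfrak{p}\mathcal{O}_{F,\mathfrak{p}})$). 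Then $g = v^{-1}(\pi^{b-m}\ell)u \in v^{-1}F_\mathfrak{p}^\times G\,u$, and combined with $g \in \Gamma$ this gives $g \in \mathrm{Hom}_\Gamma(u,v)$. Both inclusions together prove the formula.

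The valuation bookkeeping with determinants is routine; the point deserving the most care is the reverse inclusion, where one must check that integrality of $\ell$ together with $\det\ell = 1$ forces $\ell$ into the \emph{group} $G$ rather than merely into the order $\Lambda_0$ — i.e. that $\Lambda_0 \cap \mathrm{SL}_2(F_\mathfrak{p}) \subseteq G$ — since this is exactly what makes the two descriptions of $\mathrm{Hom}_\Gamma(u,v)$ coincide.
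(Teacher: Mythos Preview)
Your proof is correct and follows essentially the same approach as the paper's own proof: both start from $\mathrm{Hom}_\Gamma(u,v)=\Gamma\cap v^{-1}F_\mathfrak{p}^\times G\,u$, replace $v^{-1}$ by $\pi^{-b}v^{*}$, and use the determinant-one condition on $\Gamma$ to fix the central scalar at valuation $-m$ for one inclusion and to force $\ell\in\Lambda_0$ to have unit determinant (hence lie in $G$) for the other. Your write-up is in fact a bit more explicit than the paper's in isolating the key point $\Lambda_0\cap\mathrm{SL}_2(F_\mathfrak{p})\subseteq G$ for the reverse inclusion.
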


\begin{proof}
We check that the proof from \cite[ Lemma 3.1]{francmasdeu2014} generalizes in our setting.
We have to prove that 
\[
\Gamma \cap (v^{-1} F_\mathfrak{p}^\times G u) 
= \Gamma \cap \pi^{-m} v^* \Lambda_0 u.
\]

Let $z = v^{-1} \lambda g u \in \Gamma \cap (v^{-1} F_\mathfrak{p}^\times G u)$, 
with $\lambda \in F_\mathfrak{p}^\times$ and $g \in G$.
Taking the determinant,
\[
\det(z) = \pi^{b-a} \lambda^2 \det(g) = \pi^{m - 2a} \lambda^2 \det(g).
\]
Since $\det(z)$ and $\det(g)$ are in $\mathcal{O}_{F, \mathfrak{p}}^\times$,
we have $\val_\mathfrak{p} (\pi^{-a}\lambda) = -m$, and consequently
$\pi^{-a}\lambda g \in \pi^{-m} M_2(\mathcal{O}_{F, \mathfrak{p}})$. Therefore,
\[
\Gamma \cap (v^{-1} F_\mathfrak{p}^\times G u)
\subseteq \Gamma \cap \pi^{-m} v^* \Lambda_0 u.
\]

Conversely, given $z \in \Gamma \cap \pi^{-m} v^* \Lambda_0 u \subseteq SL_2(F_\mathfrak{p})$,
we have $z = \pi^{-m} v^* g u$ for some $g \in \Lambda_0$.
Taking the determinant, $\det z = \pi^{a+b-2m} \det(g)$, and therefore $g \in G$.
\end{proof}

Since $B$ is definite, we can consider the following filtration of $\Gamma$ by finite sets:
\[
\Gamma = \bigcup_{t \ge 1} \Gamma_t, 
\qquad 
\Gamma_t = \left\{ \iota\left(\frac{x}{\delta^t}\right) \;\middle|\; x \in R, \ \mathrm{nrd}(x) = \delta^{2t} \right\}.
\]

\begin{corollary}\label{CorollaryCeil}
If a pair of vertices or edges $u$ and $v$ are $\Gamma$-equivalent, there must exist a quaternion
\[
q = \frac{x}{\delta^{\lceil m/d \rceil}} \in R[1/\mathfrak{p}],
\]
where $m$ is defined as in Lemma~\ref{GammaHom}, $x \in R$ and $\mathrm{nrd}(x) = \delta^{2\lceil m/d \rceil}$, 
such that $\iota(q) \in \mathrm{Hom}_\Gamma (u,v) \cap \Gamma_{\lceil m/d \rceil}$.
\end{corollary}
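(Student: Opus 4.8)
The plan is to show that whenever $u$ and $v$ are $\Gamma$-equivalent, the nonempty set $\mathrm{Hom}_\Gamma(u,v)$ already contains the image of a quaternion with denominator precisely $\delta^{\lceil m/d\rceil}$, and that everything reduces to a single integrality check at $\mathfrak{p}$. Recall $\delta \in \mathcal{O}_F$ with $(\delta) = \mathfrak{p}^d$, so $\val_\pi(\delta) = d$, and put $M = \lceil m/d\rceil$; since $u$ and $v$ have entries in $\mathcal{O}_F$ with $\det u = \pi^a$, $\det v = \pi^b$ we have $a,b\ge 0$, hence $m = (a+b)/2 \ge 0$ and $M \ge 0$. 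Choose any $z \in \mathrm{Hom}_\Gamma(u,v)$ and write $z = \iota(g)$ with $g \in R[1/\mathfrak{p}]_1^{\times}$, so $\mathrm{nrd}(g) = 1$. The heart of the argument is the claim that $x := \delta^M g$ lies in $R$. Granting it, $\mathrm{nrd}(x) = \mathrm{nrd}(\delta^M)\,\mathrm{nrd}(g) = \delta^{2M}$, and then $q := x/\delta^M = g$ lies in $R[1/\mathfrak{p}]$, satisfies $\iota(q) = z \in \mathrm{Hom}_\Gamma(u,v)$, and — directly from the definition of the filtration — belongs to $\Gamma_M = \Gamma_{\lceil m/d\rceil}$, which is exactly what is asserted.

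To prove $x = \delta^M g \in R$ I would verify integrality at each finite prime $\mathfrak{q}$ of $F$, using the standard fact that $R = \bigcap_{\mathfrak{q}} (B \cap R_{\mathfrak{q}})$. For $\mathfrak{q} \ne \mathfrak{p}$ this is immediate: $g \in R[1/\mathfrak{p}] \subseteq R_{\mathfrak{q}}$, while $\delta$ is a $\mathfrak{q}$-unit because $(\delta) = \mathfrak{p}^d$, so $x \in R_{\mathfrak{q}}$. The only substantial case is $\mathfrak{q} = \mathfrak{p}$. By Lemma~\ref{GammaHom} we may write $z = \pi^{-m} v^{*} w\, u$ with $w \in \Lambda_0$, whence $\iota(\pi^m g) = \pi^m z = v^{*} w\, u \in M_2(\mathcal{O}_{F,\mathfrak{p}})$, because $u$ and $v^{*}$ have entries in $\mathcal{O}_F$, $\Lambda_0 \subseteq M_2(\mathcal{O}_{F,\mathfrak{p}})$, and $M_2(\mathcal{O}_{F,\mathfrak{p}})$ is a ring. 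Since $R$ has level coprime to $\mathfrak{p}$, it is maximal at $\mathfrak{p}$, so $M_2(\mathcal{O}_{F,\mathfrak{p}}) = \iota(R^{\max}_{\mathfrak{p}}) = \iota(R_{\mathfrak{p}})$, and therefore $\pi^m g \in R_{\mathfrak{p}}$. Finally $\val_\pi(\delta^M) = Md \ge m$, so $\delta^M/\pi^m \in \mathcal{O}_{F,\mathfrak{p}}$ and $x = (\delta^M/\pi^m)(\pi^m g) \in R_{\mathfrak{p}}$. This completes the local check, hence $x \in R$.

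The step I expect to be the main obstacle is the computation at $\mathfrak{p}$: it is where Lemma~\ref{GammaHom} is genuinely invoked, where the hypothesis that $R$ has level coprime to $\mathfrak{p}$ enters (to identify $M_2(\mathcal{O}_{F,\mathfrak{p}})$ with $\iota(R_{\mathfrak{p}})$), and where the exact exponent $M = \lceil m/d\rceil$ is forced by the inequality $\val_\pi(\delta^M) \ge m$. The remaining ingredients — multiplicativity of the reduced norm and recovering $R$ from its localizations — are routine. One further minor point: when $m = 0$ one gets $M = 0$, and since $\Gamma_0 \subseteq \Gamma_1$ the conclusion is still consistent with the filtration being indexed by $t \ge 1$.
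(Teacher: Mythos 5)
Your proof is correct and follows essentially the same route as the paper's: both invoke Lemma~\ref{GammaHom} to see that $\pi^m\iota(q)=v^*wu$ is $\mathfrak{p}$-integral, observe that $\delta^{\lceil m/d\rceil}/\pi^m$ has nonnegative valuation at $\mathfrak{p}$ (the paper phrases this via the $\mathfrak{p}$-unit $\tau=\pi^d/\delta$), and conclude $\delta^{\lceil m/d\rceil}q\in R$ from $R=R[1/\mathfrak{p}]\cap R^{\max}_{\mathfrak{p}}$, which is exactly your prime-by-prime local integrality check. Your explicit remark that maximality of $R$ at $\mathfrak{p}$ (level coprime to $\mathfrak{p}$) is what identifies $\iota(R_{\mathfrak{p}})$ with $M_2(\mathcal{O}_{F_{\mathfrak{p}}})$ is a point the paper leaves implicit, but the argument is the same.
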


\begin{proof}
Let $g \in \mathrm{Hom}_\Gamma (u,v)$, and write $g = \pi^{-m} v^* x u = \iota(q)$ as in Lemma~\ref{GammaHom},
where $x \in \Lambda_0$ and $q = k/\delta^n \in R[1/\mathfrak{p}]_1^\times$ for some $k \in R$ and $n \in \mathbb{N}$.
We prove that $\delta^{\lceil m/d \rceil} q \in R$ and thus $\iota(q) \in \Gamma_{\lceil m/d \rceil}$.

Consider $\lambda' = \iota^{-1}(\pi^m g)$, with reduced norm $\pi^{2m}$.
We have $\lambda' \in R$, since on one hand $\lambda' = \pi^m q \in R[1/\mathfrak{p}]$,
and on the other $\lambda' = \iota^{-1}(v^* x u) \in R^\mathrm{max}_\mathfrak{p}$.
Therefore $\lambda' \in R[1/\mathfrak{p}] \cap R^\mathrm{max}_\mathfrak{p} = R$.

Considering $\tau = \pi^d / \delta \in \mathcal{O}_F$, and let $h = d\lceil m/d \rceil - m$.
Then
\[
\delta^{\lceil m/d \rceil} q 
= \frac{\pi^{d\lceil m/d \rceil}}{\tau^{\lceil m/d \rceil}} q 
= \frac{\pi^h \pi^m}{\tau^{\lceil m/d \rceil}} q 
= \frac{\pi^h \lambda'}{\tau^{\lceil m/d \rceil}}.
\]
As before, $\delta^{\lceil m/d \rceil} q \in R[1/\mathfrak{p}]$, 
and we have $\delta^{\lceil m/d \rceil} q \in R^\mathrm{max}_\mathfrak{p}$ since $\tau \in \mathcal{O}_{F,\mathfrak{p}}^\times$.
\end{proof}

\medskip
A simple approach to computing such $q$, inspired by a comment found in M.Greenberg's Ph.D. thesis \cite{greenberg2006heegner},
consists in enumerating all quaternions $q \in R$ having reduced norm $\delta^{2\lceil m/d \rceil}$,
and checking if any of them satisfies $\iota(q)\cdot u = v$.

The enumeration could be done using that 
$\mathrm{Tr}_{F/\mathbb{Q}} \circ \mathrm{nrd}: B \to \mathbb{Q}$
is a definite quadratic form. Alternatively, we
 can use the LLL-algorithm to enumerate all quaternions $q \in R$ with 
\[
\mathrm{Tr}_{F/K}(\mathrm{nrd}(q)) = \mathrm{Tr}_{F/K}\bigl(\delta^{\lceil 2m/\delta \rceil}\bigr),
\]
and filter the ones that satisfy $\mathrm{nrd}(q) = \delta^{\lceil 2m/d \rceil}$.

We now provide a more efficient method influenced by the work of \cite{francmasdeu2014}.

Let $u$ and $v$ be two matrices representing two vertices or edges of the Bruhat-Tits tree, and assume that $u$ and $v$ are written in reduced form as in \ref{matrix_coeffs_rep_vertices}.
Let $2m = \val_{\mathfrak{p}}\!\big(\det(v\,u)\big)$, and let $h = d\,\lceil m/d\rceil - m$.

The problem of determining whether two vertices or edges of the Bruhat-Tits tree are $\Gamma$-equivalent, and finding an element $\gamma \in \Gamma$ that realizes the equivalence, can be reduced to finding an element $\lambda$ of reduced norm $\delta^{\,2\lceil m/d\rceil}$ in the following $\mathcal{O}_F$-lattice of rank $4$:
\[
  \Lambda_{u,v} = \iota^{-1}\!\big(\pi^{h} v^{*}\Lambda_0 u\big)\;\cap\; R \;+\; \mathfrak{p}^{\,d\lceil m/d\rceil + 1} R.
\]

\begin{lemma}\label{areEquivalent_lattice}
If the lattice $\Lambda_{u,v}$ has an element $\lambda$ of reduced norm $\delta^{\,2\lceil m/d\rceil}$, then
\[
  \gamma_\lambda \;=\; \iota\!\left(\frac{\lambda}{\delta^{\lceil m/d\rceil}}\right)
\]
is an element of $\Gamma$ such that $\gamma_\lambda \cdot u = v$.
Otherwise, $u$ and $v$ are not $\Gamma$-equivalent.
\end{lemma}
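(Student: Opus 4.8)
The plan is to unwind the definition of $\Lambda_{u,v}$ and match it against the characterization of $\mathrm{Hom}_\Gamma(u,v)$ provided by Lemma~\ref{GammaHom} and Corollary~\ref{CorollaryCeil}. First I would observe that the second summand $\mathfrak{p}^{\,d\lceil m/d\rceil+1}R$ plays no role in producing new elements of the required reduced norm: any $\lambda \in \Lambda_{u,v}$ of reduced norm $\delta^{\,2\lceil m/d\rceil}$ must, after reducing modulo $\mathfrak{p}^{\,d\lceil m/d\rceil+1}R$, lie in the first summand $\iota^{-1}(\pi^h v^*\Lambda_0 u)\cap R$, because the reduced norm of an element of $\mathfrak{p}^{\,d\lceil m/d\rceil+1}R$ has $\mathfrak{p}$-valuation at least $2(d\lceil m/d\rceil+1) > 2d\lceil m/d\rceil = \val_{\mathfrak{p}}(\delta^{\,2\lceil m/d\rceil})$ (using that $\val_{\mathfrak{p}}(\delta)=d$). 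So the extra term only serves to make $\Lambda_{u,v}$ a genuine full-rank $\mathcal{O}_F$-lattice on which LLL can be run; it does not affect which elements of the correct norm appear. Hence I may assume $\lambda \in \iota^{-1}(\pi^h v^*\Lambda_0 u)\cap R$ with $\mathrm{nrd}(\lambda)=\delta^{\,2\lceil m/d\rceil}$.

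Next I would set $\gamma_\lambda = \iota(\lambda/\delta^{\lceil m/d\rceil})$ and verify it lies in $\Gamma$ and sends $u$ to $v$. Since $\lambda \in R$, we have $\lambda/\delta^{\lceil m/d\rceil} \in R[1/\mathfrak{p}]$, and its reduced norm is $\delta^{\,2\lceil m/d\rceil}/\delta^{\,2\lceil m/d\rceil}=1$, so $\gamma_\lambda \in \Gamma$. For the action on the tree, I would write $\iota(\lambda) = \pi^h v^* g u$ for some $g \in \Lambda_0$, so that
\[
  \iota(\lambda)\, u^{-1} \;=\; \pi^h\, v^*\, g,
  \qquad
  \iota\!\left(\frac{\lambda}{\delta^{\lceil m/d\rceil}}\right) u^{-1}
  \;=\; \frac{\pi^h}{\delta^{\lceil m/d\rceil}}\, v^*\, g
  \;=\; \frac{\pi^h}{\delta^{\lceil m/d\rceil}}\,\pi^{m}\, v^{-1}\det(v)^{-1}\pi^{\val_{\mathfrak p}(\det v)} g,
\]
recalling that for a $2\times 2$ matrix $v^* = \det(v)\,v^{-1}$. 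Tracking valuations, $h+\val_{\mathfrak p}(\det v) - d\lceil m/d\rceil$ together with $2m = \val_{\mathfrak p}(\det(vu))$ shows the scalar prefactor lies in $F_{\mathfrak p}^{\times}$, and $g \in \Lambda_0 \subseteq M_2(\mathcal{O}_{F,\mathfrak p})$ with the correct determinant valuation forces $g \in G$ (this is exactly the computation already done inside the proof of Corollary~\ref{CorollaryCeil}). Therefore $\gamma_\lambda \in v^{-1} F_{\mathfrak p}^{\times} G\, u$, i.e.\ $\gamma_\lambda \cdot [u] = [v]$, which is the desired $\gamma_\lambda \cdot u = v$ as vertices/edges of $\Tp$.

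For the converse statement — that if no such $\lambda$ exists then $u$ and $v$ are not $\Gamma$-equivalent — I would argue contrapositively using Corollary~\ref{CorollaryCeil}. If $u$ and $v$ are $\Gamma$-equivalent, that corollary produces $q = x/\delta^{\lceil m/d\rceil} \in R[1/\mathfrak{p}]$ with $x \in R$, $\mathrm{nrd}(x)=\delta^{\,2\lceil m/d\rceil}$, and $\iota(q) \in \mathrm{Hom}_\Gamma(u,v)$. By Lemma~\ref{GammaHom}, $\iota(q) = \pi^{-m} v^* g u$ for some $g \in \Lambda_0$; multiplying by $\pi^{d\lceil m/d\rceil} = \pi^h \pi^m$ (and using $\tau = \pi^d/\delta \in \mathcal{O}_{F,\mathfrak p}^{\times}$ as in the corollary) shows $x = \delta^{\lceil m/d\rceil} q$ equals $\iota^{-1}(\pi^h v^* g' u)$ for a suitable $g' \in \Lambda_0$, so $x \in \iota^{-1}(\pi^h v^*\Lambda_0 u) \cap R \subseteq \Lambda_{u,v}$, contradicting the assumption. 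The main obstacle — and the only place requiring care — is the bookkeeping of $\mathfrak{p}$-adic valuations relating $h$, $m$, $d$, $\det u$, $\det v$, and the need to confirm that the lattice $\Lambda_{u,v}$, being the intersection of the $F$-span of a rank-$4$ sublattice with $R$, plus a full sublattice $\mathfrak{p}^{\,d\lceil m/d\rceil+1}R$, is itself a full-rank $\mathcal{O}_F$-lattice so that the reduced-norm search is a finite computation; this is what justifies the appeal to LLL, but it follows formally once one checks both summands lie in $R$ and the second already has full rank.
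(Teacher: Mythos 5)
Your overall route is the paper's: both directions are driven by Corollary~\ref{CorollaryCeil} and Lemma~\ref{GammaHom}, your converse direction is essentially the paper's argument, and your verification that $\gamma_\lambda$ has reduced norm $1$, lies in $\iota(R[1/\mathfrak{p}]_1^\times)$, and moves $[u]$ to $[v]$ is a more explicit version of what the paper compresses into one sentence. The step where you go beyond the paper --- and where the argument breaks --- is the opening reduction to the first summand. You claim that any $\lambda\in\Lambda_{u,v}$ with $\operatorname{nrd}(\lambda)=\delta^{2\lceil m/d\rceil}$ must already lie in $\iota^{-1}(\pi^h v^*\Lambda_0 u)\cap R$, on the grounds that elements of $\mathfrak{p}^{d\lceil m/d\rceil+1}R$ have reduced norm of valuation at least $2(d\lceil m/d\rceil+1)$. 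But the reduced norm is a quadratic form, not additive: writing $\lambda=\lambda_1+\lambda_2$ with $\lambda_1$ in the first summand and $\lambda_2\in\mathfrak{p}^{d\lceil m/d\rceil+1}R$, one has $\det(\lambda_1+\lambda_2)=\det\lambda_1+\det\lambda_2+\operatorname{tr}(\lambda_1^*\lambda_2)$, and the cross term means that the valuation of $\operatorname{nrd}(\lambda_2)$ tells you nothing about whether $\lambda$ itself sits in the first summand. Nor is the second summand contained in the first: locally at $\mathfrak{p}$ that containment amounts to $\pi^{m+1}M_2(\mathcal{O}_{F_\mathfrak{p}})\subseteq v^*\Lambda_0 u$, which fails once $m\ge 2$ (e.g.\ $u$ the identity, $v=\bigl(\begin{smallmatrix}\pi^{2m}&0\\0&1\end{smallmatrix}\bigr)$, where the lower-left entries of $v^*\Lambda_0 u$ have valuation at least $2m$). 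So the sentence ``it does not affect which elements of the correct norm appear'' is precisely the assertion that needs proof, and the justification offered is a non sequitur.

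This matters because the forward implication genuinely requires $\iota(\lambda)\in\pi^h v^*\Lambda_0 u$ before Lemma~\ref{GammaHom} can be applied to conclude $\gamma_\lambda\cdot u=v$; without the summand reduction, your second paragraph has nothing to start from. (The paper's own proof is equally terse at this exact point --- it simply cites Corollary~\ref{CorollaryCeil} --- so the difficulty sits in the same place; but your proposal makes an explicit claim and supports it with an argument that does not work. To close the gap one must either control the cross term directly or work at higher $\mathfrak{p}$-adic precision, as in the basis computation following the lemma, where precision $2d\lceil m/d\rceil$ rather than $d\lceil m/d\rceil+1$ is used.)
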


\begin{proof}
The $\mathfrak{p}$-adic valuation of the reduced norm of the quaternions in the lattice $\Lambda_{u,v}$ is at least
$2m + 2h = 2d\lceil m/d\rceil$, since the determinant of $\pi^{h} v^{*}\Lambda_0 u$ also satisfies this bound, and $\iota$ converts reduced norms into determinants.

From the definition, if $\lambda \in \Lambda_{u,v}$ has reduced norm $\delta^{\,2\lceil m/d\rceil}$, then using Corollary~\ref{CorollaryCeil} we see that $\gamma_\lambda \in \mathrm{Hom}_\Gamma(u,v)$.

On the other hand, assuming that $u$ and $v$ are $\Gamma$-equivalent, by Corollary~\ref{CorollaryCeil} there is an element
$q = x/\delta^{\lceil m/d\rceil} \in R[1/\mathfrak{p}]^{\times}_1$ such that
$\iota\!\big(q\,\delta^{\lceil m/d\rceil}\big) = \pi^{h} v^{*} y u$ for some $y \in \Lambda_0$.
The element $\lambda = q\,\delta^{\lceil m/d\rceil}$ has reduced norm $\delta^{\,2\lceil m/d\rceil}$.
Again, Corollary~\ref{CorollaryCeil} implies that $\lambda$ belongs to $\Lambda_{u,v}$.
\end{proof}

Since $\iota$ maps the reduced norm of a quaternion in $B_{\mathfrak{p}}$ to the determinant of the corresponding matrix in $M_2(F_{\mathfrak{p}})$,
the reduced norm of all quaternions in $\Lambda_{u,v}$ is a multiple of $\delta^{\,2\lceil m/d\rceil}$.
To find an element of reduced norm $\delta^{\,2\lceil m/d\rceil}$ in $\Lambda_{u,v}$, we view $\Lambda_{u,v}$ as a $\mathbb{Z}$-lattice of rank $4n$.

The quadratic form $\operatorname{Tr}_{F/\mathbb{Q}} \circ \operatorname{nrd}$ on $\Lambda_{u,v}$ is positive definite, thus its set of shortest nonzero vectors is finite.
Since isometries preserve the set of shortest vectors,
they coincide with the shortest vectors of the definite quadratic form
\[
  q(x) \;=\; \operatorname{Tr}_{F/\mathbb{Q}}\!\left(\frac{\operatorname{nrd}(x)}{\delta^{\,2\lceil m/d\rceil}}\right).
\]
The value of this quadratic form at an element of reduced norm $\delta^{\,2\lceil m/d\rceil}$ is $\operatorname{Tr}_{F/\mathbb{Q}}(1) = [F:\mathbb{Q}]$.
Thus, we can use the LLL algorithm to list the vectors $\lambda \in \Lambda_{u,v}$ such that $q(\lambda) = \operatorname{Tr}_{F/\mathbb{Q}}(1)$.
If the list is empty, then $\mathrm{Hom}_\Gamma(u,v) = \emptyset$.
Otherwise, we have found an element such that $\iota\!\big(\lambda/\delta^{\lceil m/d\rceil}\big) \in \mathrm{Hom}_\Gamma(u,v)$.

\medskip
A basis for the lattice $\Lambda_{u,v}$ may be computed with precision $d\lceil m/d\rceil + 1$ as follows.

First fix a $\mathbb{Z}$-basis $B$ of the Eichler order $R$, consisting of $4d$ quaternions.
Let $e_{\mathfrak{p}}$ and $f_{\mathfrak{p}}$ be the ramification index and inertial degree of $\mathfrak{p}$ respectively.
The $\mathbb{Z}_p$-module $\mathcal{O}_{F,\mathfrak{p}}$ has rank $e_{\mathfrak{p}} f_{\mathfrak{p}}$, and fix a $\mathbb{Z}_p$-basis for $M_2(\mathcal{O}_{F,\mathfrak{p}})$.

Let $L$ be the $4d \times 4 f_{\mathfrak{p}} e_{\mathfrak{p}}$ matrix representing the embedding
\[
  R \hookrightarrow R^{\max}\otimes_{\mathcal{O}_F} \mathcal{O}_{F,\mathfrak{p}}
  \;\;\xrightarrow{\ \iota\ }\;\; M_2(\mathcal{O}_{F,\mathfrak{p}})
\]
in the chosen bases, with $p$-adic precision $2d\lceil m/d\rceil$.

Let $Z$ be the $(4 e_{\mathfrak{p}} f_{\mathfrak{p}})\times(4 e_{\mathfrak{p}} f_{\mathfrak{p}})$ matrix representing the $\mathcal{O}_{F,\mathfrak{p}}$-lattice $v^{*}\Lambda_0 u$ with the same precision $2d\lceil m/d\rceil$.
Then the lattice $\iota^{-1}\!\big(\pi^{h} v^{*}\Lambda_0 u\big)\cap R$ can be described, with precision $2d\lceil m/d\rceil$, as
\[
  \Bigl\{\, xB \;\Bigm|\; x \in \mathbb{Z}^{4d},\ \exists\, y \in \mathbb{Z}_p^{\,4 e_{\mathfrak{p}} f_{\mathfrak{p}}}
  \text{ such that } xL \equiv yZ \pmod{p^{\,2d\lceil m/d\rceil}} \,\Bigr\}.
\]

Equivalently, this is the $\mathbb{Z}$-lattice spanned by those $x \in \mathbb{Z}^{4d}$ for which there exists
$y \in \mathbb{Z}_p^{\,4e_{\mathfrak p}f_{\mathfrak p}}$ such that
\[
  \begin{bmatrix} x & y \end{bmatrix}
  \begin{bmatrix} L \\[2pt] -Z \end{bmatrix}
  \equiv 0 \pmod{p^{\,2d\lceil m/d\rceil}}.
\]
Finally, add the vectors of $\mathfrak{p}^{\,d\lceil m/d\rceil + 1}R$ (expressed in the chosen $\mathbb{Z}$-basis of $R$) to this lattice, and reduce the basis.

We define two auxiliary functions in Algorithm~\ref{alg:gamma_equivalence} that are used in Algorithm~\ref{alg:fundom} 
to determine equivalences among vertices and edges when computing fundamental domains.

Let \texttt{$\Gamma$-equivalent\_edge} be a function that takes a list of edges and an edge $e$.
If there exists an edge $e'$ in the list and some $\gamma \in \Gamma$ such that
\[
  \iota(\gamma)\, e = e',
\]
it returns the pair $(e', \gamma)$. Otherwise, it returns nothing.

Similarly, let \texttt{$\Gamma$-equivalent\_vertex} be a function that takes a list of vertices
and a vertex $v$, and returns $(v', \gamma)$ if $\iota(\gamma)\, v = v'$ 
for some vertex $v'$ in the list and some $\gamma \in \Gamma$.
Otherwise, it returns nothing.

\begin{algorithm}[H]
\caption{Check $\Gamma$-equivalence of vertices}\label{alg:gamma_equivalence}
\begin{algorithmic}[1]
\Require Prime ideal $\mathfrak{p}$; definite quaternion algebra $B$; Eichler order $R\subseteq B$ as above; splitting $\iota_{\mathfrak{p}}: R_{\mathfrak{p}}\simeq M_2(\mathcal{O}_{F_{\mathfrak{p}}})$; matrices $u,v\in M_2(\mathcal{O}_{F_{\mathfrak{p}}})$ representing vertices of the Bruhat-Tits tree $\mathcal{T}_{\mathfrak{p}}$.
\Ensure A boolean indicating whether the vertices are $\Gamma$-equivalent; if true, also a quaternion $\gamma$ with $\iota(\gamma)[u]=[v]$.

\State $m \gets (v_\mathfrak{p}(\det v_1)+v_\mathfrak{p}(\det v_2)) / 2$
\If{$m\notin \mathbb{Z}$} 
    \State \Return \textbf{False}
\EndIf
\State $h \gets d\,\lceil m/d\rceil - m$ \Comment{Now $v_{\mathfrak{p}}(\det \pi^{h} v_1 v_2)=2d\lceil m/d\rceil$}

\Statex \textit{Precomputations}
\State $L \gets \bigl(\iota(x)\bmod \mathfrak{p}^{\,2\lceil m/d\rceil+1}\bigr)_{x\in \mathrm{Basis}(R)}$ 
\State $K \gets \mathrm{left\text{-}kernel}(L)$
\State $Q_{\mathrm{nrd}} \gets \bigl(\mathrm{trd}(x\,\overline{y})\bigr)_{x,y\in \mathrm{Basis}(R)}$ \Comment{Quadratic form for $\mathrm{nrd}:B\to \mathbb{Q}$}
\State $\Lambda_0 \gets M_2(\mathcal{O}_{F_{\mathfrak{p}}})$
\State $P \gets \bigl(\ell r\bigr)_{\ell\in \mathrm{Basis}(\mathfrak{p}^{\,2d\lceil m/d\rceil+1}),\, r\in \mathrm{Basis}(R)}$

\Statex \textit{Define the lattice $\Lambda$ from Lemma \ref{GammaHom}}
\State $Z \gets \bigl(v^{*}\, b\, u \bmod \mathfrak{p}^{\,2d\lceil m/d\rceil+1}\bigr)_{b\in \mathrm{Basis}(\Lambda_0)}$
\State $S \gets \mathrm{solve\_left}(L,Z)$

\State $M \gets$ first $4n$ columns of $\mathrm{row\_span}\!\left(
\begin{bmatrix}
K & 0\\
S & -I_{4e_{\mathfrak{p}} f_{\mathfrak{p}}}
\end{bmatrix}\right)$

\State $\mathrm{Basis}(\Lambda_{u,v}) \gets \mathrm{row\_reduce}\!\left(\begin{bmatrix} M\\ P\end{bmatrix}\right)$

\Statex \textit{Define a quadratic form on $\Lambda_{u,v}$}
\State $Q \gets \bigl(\mathrm{Tr}_{F/\mathbb{Q}}(Q_{\mathrm{nrd}}[i,j])/\delta^{2m}\bigr)_{1\le i,j\le 4}$
\State $Q_{\Lambda} \gets \mathrm{Basis} (\Lambda_{u,v}) \, \cdot \, Q \, \cdot \, \mathrm{Basis}(\Lambda_{u,v})^{-1}$
\State $\lambda \gets$ shortest vector of $Q_{\Lambda}$

\If{$Q_{\Lambda}(\lambda) > \mathrm{Tr}_{F/\mathbb{Q}}(1)$}
  \State \Return \textbf{False}
\EndIf

\State $q \gets \lambda \cdot \mathrm{Basis}(\Lambda_{u,v})$
\State \Return \textbf{True}, $q$
\end{algorithmic}
\end{algorithm}

\begin{algorithm}[H]
\caption{Compute fundamental domain}\label{alg:fundom}
\begin{algorithmic}[1]
\Require A prime ideal $\mathfrak{p}$, an Eichler order $R \subseteq B$ as above, and a splitting $\iota_{\mathfrak{p}}: R_{\mathfrak{p}} \cong M_2(\mathcal{O}_{F_\mathfrak{p}})$.
\Statex Optionally \texttt{max\_genus} can be set to end the computation early if the genus of the quotient graph exceeds \texttt{max\_genus}.
\Ensure A fundamental domain graph $G = (V, E)$ for the action of $\Gamma$ on $\mathcal{T}_{\mathfrak{p}}$, along with relation arrays \texttt{edges\_relations} and \texttt{vertex\_relations} that encode the $\Gamma$-equivalences between boundary elements.
\Statex Returns \emph{"Limit exceeded"} if the genus exceeds \texttt{max\_genus}.
\vspace{0.5em}
\State $\text{pending\_vertices} \gets [v_0]$
\State $G = (E, V) \gets (\emptyset, \emptyset)$
\State $\text{genus} \gets 0$
\State $\text{edges\_relations} \gets [\;]$
\State $\text{vertex\_relations} \gets [\;]$
\vspace{0.5em}
\While{$\text{pending\_vertices} \neq \emptyset$}
  \State $v \gets \text{pop}(\text{pending\_vertices})$
  \For{$e \in \text{edges\_leaving}(v)$}
    \State $(e', \gamma_e) \gets \Gamma\text{-equivalent\_edge}(E, e)$
    \If{$e' = \emptyset$}
      \State $E \gets \text{append}(E, e)$
      \State $v_t \gets \text{target}(e)$
      \State $(v'_t, \gamma_{v'_t}) \gets \Gamma\text{-equivalent\_vertex}(V, v_t)$
      \If{$v'_t = \emptyset$}
        \State $V \gets \text{append}(V, v_t)$
        \State $\text{pending\_vertices} \gets \text{append}(\text{pending\_vertices}, v_t)$
      \Else
        \State $\text{vertex\_relations}[v_t] \gets \gamma_{v'_t}$
        \State $\text{genus} \gets \text{genus} + 1$
        \If{$\text{genus} > \text{max\_genus}$}
          \State \Return \emph{"Limit exceeded"}
        \EndIf
      \EndIf
    \Else
      \State $\text{edges\_relations}[e] \gets \gamma_e$
    \EndIf
  \EndFor
\EndWhile
\vspace{0.3em}
\State \Return $G, \text{edges\_relations}, \text{vertex\_relations}$
\end{algorithmic}
\end{algorithm}

Once a fundamental domain has been computed and stored as a finite set of vertices and edges,
we can use the algorithm described in~\ref{areEquivalent_lattice}
to reduce any given edge $e$ of the Bruhat-Tits tree to an edge $\tilde{e}$ belonging to the fundamental domain, 
while providing an element $\gamma \in \Gamma$ such that
$\tilde{e} = \gamma e$.

However, the algorithm used to decide the equivalence between pairs of edges involves
finding short vectors in a lattice, which is inefficient.
If one wishes to reduce an edge to the computed fundamental domain by exhaustive search,
the procedure would require as many comparisons as the number of edges in the fundamental domain.

In addition to storing the vertices and edges that define a fundamental domain,
it is useful to precompute and store the reduction of edges from one additional layer of the Bruhat-Tits tree into the fundamental domain.
We refer to this information as \emph{boundary data}.

\begin{lemma}
Let $v$ be a vertex of the Bruhat-Tits tree, and let $\mathrm{dist}(v)$ denote its distance from the base vertex $v_0$.
There exists an algorithm to find a vertex in the fundamental domain that is $\Gamma$-equivalent to $v$,
using at most $(N(\mathfrak{p})+1)\,\mathrm{dist}(v)$ queries to the stored boundary data.
\end{lemma}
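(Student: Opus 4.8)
The plan is to use the precomputed boundary data to walk $v$ toward the fundamental domain one layer at a time, using the triangle structure of the tree to control the number of queries. Recall that the boundary data records, for every edge that leaves a vertex of the fundamental domain into the ``extra layer'' of $\mathcal{T}_{\mathfrak{p}}$, an element of $\Gamma$ reducing that edge (hence its far endpoint) back into the computed domain. So I would proceed by induction on $\mathrm{dist}(v)$.

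First, if $\mathrm{dist}(v)=0$ then $v=v_0$ is already in the fundamental domain and no queries are needed. Now suppose $\mathrm{dist}(v)=\ell\ge 1$. Consider the geodesic path $v_0 = w_0, w_1, \ldots, w_\ell = v$ in the tree. Since the fundamental domain is connected and contains $v_0$ together with its full ``boundary layer'', at least the first edge $w_0w_1$ of this path, and more generally a maximal initial segment, lies in the stored data; in the worst case the edge $w_{\ell-1}w_\ell$ leaving $w_{\ell-1}$ (which we may assume lies in the domain after applying the inductive step to $w_{\ell-1}$) is one of the $N(\mathfrak{p})+1$ edges emanating from $w_{\ell-1}$, so identifying it and looking it up in the boundary data costs at most $N(\mathfrak{p})+1$ queries. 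Applying the stored element $\gamma_1\in\Gamma$ associated to that edge sends $v=w_\ell$ to a vertex $v' = \gamma_1 v$ inside the fundamental domain if $w_{\ell-1}$ was already in the domain, or otherwise strictly closer: more carefully, one applies the inductive hypothesis to the vertex $w_{\ell-1}$, which has distance $\ell-1$, obtaining $\gamma'\in\Gamma$ with $\gamma' w_{\ell-1}$ in the domain using at most $(N(\mathfrak{p})+1)(\ell-1)$ queries; then $\gamma' v$ is adjacent to a domain vertex, so it lies in the extra boundary layer, and a single lookup — costing at most $N(\mathfrak{p})+1$ queries to scan the edges leaving $\gamma' w_{\ell-1}$ — yields $\gamma_1$ with $\gamma_1\gamma' v$ in the fundamental domain. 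Composing, $\gamma=\gamma_1\gamma'$ works and the total query count is at most $(N(\mathfrak{p})+1)(\ell-1) + (N(\mathfrak{p})+1) = (N(\mathfrak{p})+1)\,\ell = (N(\mathfrak{p})+1)\,\mathrm{dist}(v)$, completing the induction.

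The key point making the induction go through is that $\Gamma$ acts by tree automorphisms, hence isometrically, so $\mathrm{dist}(\gamma' w_{\ell-1}, \gamma' v) = 1$ and $\gamma'$ maps the geodesic from $v_0$-side into the domain correctly; combined with the fact that the boundary data is, by construction, complete for \emph{all} $N(\mathfrak{p})+1$ edges leaving each fundamental-domain vertex, every vertex at distance $1$ from the domain is covered. I would also note that the adjacency of $\gamma' v$ to the domain vertex $\gamma' w_{\ell-1}$ is what guarantees the lookup succeeds rather than merely being attempted.

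The main obstacle is bookkeeping the base of the induction and the ``extra layer'' carefully: one must be sure that after applying $\gamma'$ the vertex $\gamma' w_{\ell-1}$ is genuinely the vertex of the fundamental domain recorded as adjacent to $\gamma' v$ in the boundary data, and that scanning its outgoing edges to locate the one pointing at $\gamma' v$ is exactly a query to the stored boundary data (not an extra equivalence computation). This is essentially a matter of defining precisely what a ``query to the boundary data'' is — I would fix the convention that one query inspects one stored edge–reduction pair — and then the count $N(\mathfrak{p})+1$ per layer is immediate from the degree of $\mathcal{T}_{\mathfrak{p}}$.
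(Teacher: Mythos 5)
Your proof is correct and follows essentially the same strategy as the paper: walk along the geodesic from $v_0$ to $v$, use the stored boundary data to pull the path into the fundamental domain one layer at a time (exploiting that $\Gamma$ acts by tree isometries), at a cost of at most $N(\mathfrak{p})+1$ queries per layer. The paper phrases this as an iteration that repeatedly fixes the first edge lying outside the domain, while you organize it as an induction on the prefix of the path; the two are the same algorithm and give the same bound.
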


\begin{proof}
Once the boundary data are precomputed, we can determine the edge in the fundamental domain equivalent to any given edge~$e$ 
by considering the path
\[
[v_0, e_0, v_1, e_1, \ldots, v_{n-1}, e_n],
\]
where the final edge $e_n$ coincides with~$e$, and each $e_i$ connects the vertices $v_i$ and $v_{i+1}$.

Suppose $e_i$ is the first edge of the path lying outside the fundamental domain.
It belongs to the boundary, and therefore we can recover $\gamma_i$ and $\tilde{e}_i$ in constant time.
The action of $\gamma_i$ on the entire path maps $v_i$ to $\gamma_i v_i$ (which lies inside the fundamental domain) and sends the edge $e_n$ to $\gamma_i e_n$.
The new edge $\gamma_i e_n$ remains $\Gamma$-equivalent to $e$, but its path to the fundamental domain is one edge shorter.

By iterating this process at most $n = \mathrm{dist}(v)$ times, 
we obtain an edge $\tilde{e}$ in the fundamental domain equivalent to~$e$, 
together with the element
\[
\gamma = \gamma_n \gamma_{n-1} \cdots \gamma_i \in \Gamma
\]
realizing this equivalence.
\end{proof}

\section{Examples}
\label{sec:examples}
When the prime $\mathfrak{p}$ is unramified of inertia degree $1$, the local field satisfies
$F_{\mathfrak{p}} \cong \mathbb{Q}_p$.
In this case, the vertices and edges of the Bruhat-Tits tree can be represented by
$2 \times 2$ integer matrices.

Let $F = \mathbb{Q}(\phi)$, where $\phi$ is a root of $x^2 - x - 1$.
We compute the fundamental domain for a maximal order in the definite quaternion algebra over $F$ of discriminant
$\mathfrak{N}^- = 1$,
at the prime $\mathfrak{p} = (-5\phi + 2)$, which has norm $31$.

We measured the execution time of Algorithm~\ref{alg:gamma_equivalence} for a sample of $100$ vertices of the Bruhat-Tits tree associated to $\mathfrak{p}$, 
at different distances from the origin.
All timings were obtained on an Apple M4 processor with 16~GB of RAM. 
The results are summarized in Figure~\ref{fig:is_equivalent_timing}.

\begin{figure}
    \centering
    \includegraphics[width=0.8\linewidth]{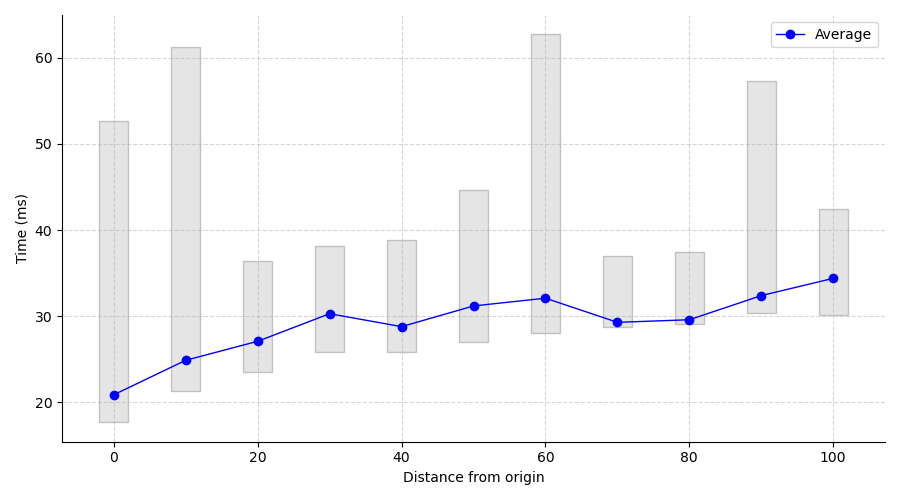}
   \caption{Running time of Algorithm~\ref{alg:gamma_equivalence} for a sample of $100$ random vertices at different distances from the origin. Gray boxes indicate maximum and minimum times.} 
    \label{fig:is_equivalent_timing}
\end{figure}

We observe that the running time grows slowly when comparing distant vertices or edges.
This suggests that our algorithm efficiently computes fundamental domains even for trees of considerable diameter.

In the next examples, we illustrate the output of Algorithm~\ref{alg:fundom}.
In the quadratic field $\mathbb{Q}(\sqrt{97})$, the prime $3$ splits as
\[
  (3) = (10 - \sqrt{97})(10 + \sqrt{97}).
\]
For $\mathfrak{p} = (10 - \sqrt{97})$ and $\mathfrak{N}^- = \mathfrak{N}^+ = 1$,
the corresponding fundamental domain is shown below.

\begin{figure}[ht!]
\centering
\includegraphics[width=0.4\textwidth]{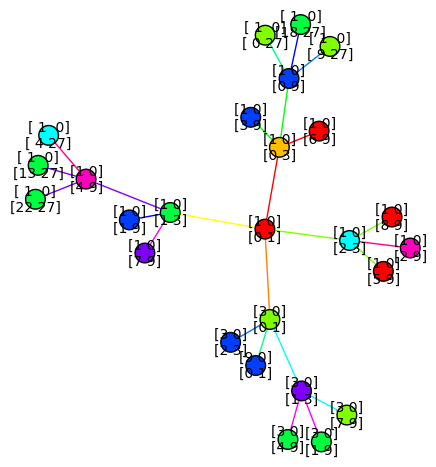}
\hfill
\includegraphics[width=0.4\textwidth]{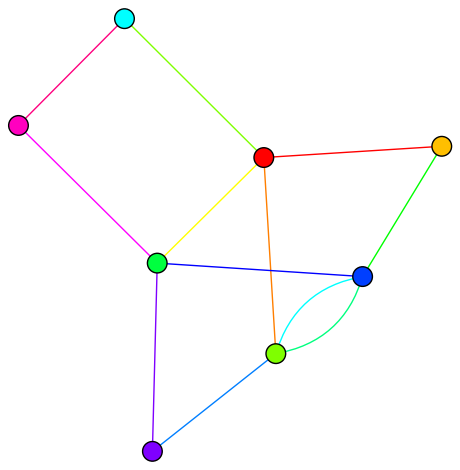}
\caption{
Fundamental domain and boundary data (left), and the quotient of the Bruhat-Tits tree (right).
}
\label{fig:fundamental_domain_quotient}
\end{figure}

As a final example, consider the quadratic field $F = \mathbb{Q}(\sqrt{5})$.
Let $\mathfrak{p} = \left( \tfrac{3}{2}\sqrt{5} - \tfrac{1}{2} \right)$, and take the quaternion algebra over $F$ of discriminant $\mathfrak{N}^- = (21)$.
Let $\mathfrak{N}^+ = (1)$, so that the Eichler order is maximal.
For these data, the computed fundamental domain has $16$ vertices and $80$ edges, corresponding to a Shimura curve of genus $65$.
This computation required approximately $30$ seconds on the same hardware as above.

\begin{figure}[ht!]
\centering
\includegraphics[width=0.8\textwidth]{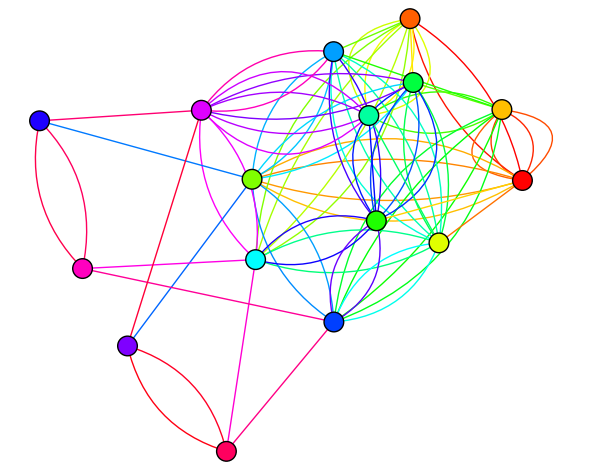}
\caption{
Fundamental domain of genus~$65$, with $16$ vertices and $80$ edges.
}
\label{fig:example_big_graph}
\end{figure}

\section{Applications}
\label{sec:applications}
The fundamental domains computed above have several applications. For example, in~\cite{francmasdeu2014} the authors use them to compute equations for certain Shimura curves. In this section, we illustrate how these fundamental domains yield $\mathfrak{p}$-adic uniformizations of Shimura curves.

Let $B$ be a quaternion algebra over a totally real field $F$, ramified at all but one infinite place, and let $\mathfrak{p}$ be a finite prime dividing its discriminant.
Write the discriminant of $B$ as $\mathfrak{p}\mathfrak{N}^-$, and let $\mathfrak{N}^+$ be an ideal coprime to $\mathfrak{p}\mathfrak{N}^-$.

The Čerednik-Drinfel'd theorem provides a $\mathfrak{p}$-adic uniformization of the Shimura curve 
$X_{\mathfrak{p}\mathfrak{N}^-, \mathfrak{N}^+}$ by a rigid-analytic curve 
$X^{\mathfrak{p}}_{\mathfrak{N}^-, \mathfrak{N}^+}$.

The special fiber $C$ of the Drinfel'd integral model of the Shimura curve is a semistable curve whose irreducible components are isomorphic to $\mathbb{P}^1$.
Their incidence relations can be represented by a reduction graph $G$, which has a vertex for each component and an edge for each node.
This graph is canonically identified with the quotient
$\Gamma^{\mathfrak{p}}_{\mathfrak{N}^-, \mathfrak{N}^+} \backslash \mathcal{T}_{\mathfrak{p}}$
\cite[Corollary~3.1.16]{milione2015}.

The arithmetic genus of this special fiber can be computed using the formula
\cite[Lemma~3.18, §10.3]{Liu2002}
\[
  p_a(C) = \beta(G) + \sum_{1 \le i \le n} p_a(\Gamma'_i),
\]
where $\beta(G)$ is the first Betti number of $G$.
Since the components $\Gamma'_i$ are isomorphic to $\mathbb{P}^1$, their genus is zero, and thus $p_a(C) = \beta(G)$.

For all Shimura curves defined over totally real number fields of degrees between $2$ and $7$, whose genus is at most $3$, we have verified whether a 
$\mathfrak{p}$-adic uniformization satisfying our requirements is available.
In particular, the prime $\mathfrak{p}$ must be unramified of inertia degree~$1$.

For those cases where a $\mathfrak{p}$-adic uniformization is available, we chose an Eichler order of level $\mathfrak{N}^+$ and computed the corresponding fundamental domain.
We recorded all such fundamental domains with genus at most~$3$.

In order to reduce redundant calculations, we consider two uniformizations given by the data 
$(\mathfrak{p}_1, \mathfrak{N}^-_1, \mathfrak{N}^+_1)$ and $(\mathfrak{p}_2, \mathfrak{N}^-_2, \mathfrak{N}^+_2)$ 
to be equivalent if there exists an automorphism $\sigma$ of $F$ such that 
$\sigma(\mathfrak{p}_1) = \mathfrak{p}_2$, 
$\sigma(\mathfrak{N}^-_1) = \mathfrak{N}^-_2$, and 
$\sigma(\mathfrak{N}^+_1) = \mathfrak{N}^+_2$.

The number of Shimura curves for which we were able to compute a fundamental domain of $\mathcal{T}_{\mathfrak{p}}$ for
$\Gamma^{\mathfrak{p}}_{\mathfrak{N}^-, \mathfrak{N}^+}$
is summarized in Table~\ref{table:shimura_counts}.

\begin{table}[ht!]
\centering
\caption{
Number of Shimura curves found that admit a $\mathfrak{p}$-adic uniformization 
by an unramified prime of inertia degree~1, grouped by number field degree and genus.
}
\label{table:shimura_counts}
\begin{tabular}{ccccc}
\toprule
\textbf{Number field degree} & \multicolumn{4}{c}{\textbf{Genus}} \\
\cmidrule(lr){2-5}
 & 0 & 1 & 2 & 3 \\
\midrule
2 & 18 & 41 & 34 & 46 \\
3 & 7  & 37 & 11 & 37 \\
4 & 29 & 50 & 61 & 53$^{1}$ \\
5 & 0  & 0  & 2  & 2  \\
6 & 2  & 7  & 12 & 8  \\
7 & 0  & 0  & 0  & 0  \\
\midrule
\textbf{Total} & \textbf{56} & \textbf{135} & \textbf{120} & \textbf{146} \\
\bottomrule
\end{tabular}
\vspace{0.5em}

\footnotesize{$^{1}$ Count might not be complete because of an implementation bug.}
\end{table}

The complete tables are available at \url{https://eloitor.github.io/btquotients/}.

Following the approach of Voight~\cite{voight-shimura-curves},
for a given number field, we can compute a finite list of Shimura curves that is complete up to a given genus by using the Selberg–Zograf bound.
We normalize the measure for the hyperbolic area as
\[
  \mu(D) = \frac{1}{2\pi} \iint_D \frac{dx\,dy}{y^2},
\]
so that an ideal triangle has area $1/2$.
The Selberg--Zograf bound~\cite[Lemma~1.1]{voight-shimura-curves} gives an upper bound on the area of a Shimura curve in terms of its genus:
\[
  A < \frac{64}{3}(g + 1).
\]

The area of the Shimura curve 
$X_{\mathfrak{p}\mathfrak{N}^-, \mathfrak{N}^+}$ 
is given by Shimizu’s formula~\cite[Eq.~1]{voight-shimura-curves}:
\[
  A = \frac{4}{(2\pi)^{2n}}\, d_F^{3/2}\, \zeta_F(2)\,
      \Phi(\mathfrak{p}\mathfrak{N}^-)\, \Psi(\mathfrak{N}^+),
\]
where $\zeta_F$ is the Dedekind zeta function, and
\[
  \Phi(\mathfrak{N})
  = N(\mathfrak{N})
  \prod_{\mathfrak{p} \mid \mathfrak{N}} \left(1 - \frac{1}{N(\mathfrak{p})}\right),
  \qquad
  \Psi(\mathfrak{N})
  = N(\mathfrak{N})
  \prod_{\mathfrak{p} \mid \mathfrak{N}} \left(1 + \frac{1}{N(\mathfrak{p})}\right).
\]

Using the bound 
$\zeta_F(2)\,\Phi(\mathfrak{p})\,\Phi(\mathfrak{N}^-)\,\Psi(\mathfrak{N}^+) \ge 1$
in the inequality
\begin{equation}\label{bound_discriminant}
  \frac{4}{(2\pi)^{2n}}\, d_F^{3/2}\,
  \zeta_F(2)\, \Phi(\mathfrak{p})\Phi(\mathfrak{N}^-)\Psi(\mathfrak{N}^+)
  < \frac{64}{3}(g + 1),
\end{equation}
we see that it suffices to consider fields whose discriminant satisfies
\[
  \frac{4}{(2\pi)^{2n}}\, d_F^{3/2} < \frac{64}{3}(g + 1).
\]

For each field $F$ of degree $n$ satisfying this bound, define the constant
\[
  C_F = \frac{3}{16}\,
  \frac{d_F^{3/2}\,\zeta_F(2)}{(2\pi)^{2n}},
\]
so that inequality~\eqref{bound_discriminant} becomes
\[
  C_F\,\Phi(\mathfrak{p})\,\Phi(\mathfrak{N}^-)\,\Psi(\mathfrak{N}^+) < g + 1.
\]

We list candidates for $\mathfrak{p}$, $\mathfrak{N}^-$, and $\mathfrak{N}^+$ as follows.
Using the elementary bounds
$N(\mathfrak{p}) - 1 \le \Phi(\mathfrak{p}\mathfrak{N}^-)$
and
$\Psi(\mathfrak{N}^+) \le N(\mathfrak{N}^+)$,
we deduce that it suffices to consider ideals $\mathfrak{N}^+$ of norm bounded by
\[
  \frac{g + 1}{C_F \bigl(N(\mathfrak{p}) - 1\bigr)}
\]
satisfying
\[
  \Psi(\mathfrak{N}^+) <
  \frac{g + 1}{C_F \bigl(N(\mathfrak{p}) - 1\bigr)}.
\]

Let $\mathfrak{q}_0$ be the smallest prime that splits in $F$.
For each $\mathfrak{N}^+$ of norm bounded by
\[
  \frac{g + 1}{C_F \bigl(N(\mathfrak{q}_0) - 1\bigr)},
\]
we consider all primes
$\mathfrak{p} \nmid \mathfrak{N}^+$,
unramified and of inertia degree~$1$,
satisfying
\[
  \Phi(\mathfrak{p})
  = N(\mathfrak{p}) - 1
  \le
  \frac{g + 1}{C_F \Phi(\mathfrak{N}^-)\Psi(\mathfrak{N}^+)}
  \le
  \frac{g + 1}{C_F \Psi(\mathfrak{N}^+)}.
\]

For each candidate pair $(\mathfrak{N}^+, \mathfrak{p})$, we list all possible ideals $\mathfrak{N}^-$ as follows.
Since we require $\mathfrak{N}^-$ to be square-free,
\[
  \Phi(\mathfrak{N}^-) = \prod_{\mathfrak{p} \mid \mathfrak{N}^-} \bigl(N(\mathfrak{p}) - 1\bigr).
\]
From this and the bound
\begin{equation}\label{bound_N_minus}
  \Phi(\mathfrak{N}^-) \le
  \frac{g + 1}{C_F \Phi(\mathfrak{p}) \Psi(\mathfrak{N}^+)},
\end{equation}
we observe that the prime factors of $\mathfrak{N}^-$ are bounded by
\[
  \frac{g + 1}{C_F \Phi(\mathfrak{p}) \Psi(\mathfrak{N}^+)} + 1.
\]
Finally, we construct $\mathfrak{N}^-$ as a product of distinct primes satisfying this bound,
not dividing $\mathfrak{p}\mathfrak{N}^+$,
such that the number of factors of $\mathfrak{N}^-$ has the same parity as the degree of $F$, and that inequality~\eqref{bound_N_minus} holds.

\bibliographystyle{amsalpha}
\bibliography{refs}
\end{document}